\theoremstyle{plain}
\newtheorem{Thm}{Theorem}[section]
\newtheorem{Lem}[Thm]{Lemma}
\newtheorem{Prop}[Thm]{Proposition}
\newtheorem{Cor}[Thm]{Corollary}
\theoremstyle{definition}
\newtheorem{Def}[Thm]{Definition}
\newtheorem{example}[Thm]{Example}
\numberwithin{equation}{section}
\newcommand{\bnum}{\begin{enumerate}}
\newcommand{\enum}{\end{enumerate}}
\begin{document}
\begin{center}
\textbf{Weakly Clean Ideal}\\
\end{center}
\begin{center}
Ajay Sharma and Dhiren Kumar Basnet\footnote{Corresponding author}\\
\small{\it Department of Mathematical Sciences, Tezpur University,
 \\ Napaam, Tezpur-784028, Assam, India.\\
Email: ajay123@tezu.ernet.in, dbasnet@tezu.ernet.in}
\end{center}

\noindent \textit{\small{\textbf{Abstract:}  }} Motivated by the concept of clean ideals, we introduce the notion of weakly clean ideals. We define an ideal $I$ of a ring $R$ to be weakly clean ideal if for any $x\in I$, $x=u+e$ or $x=u-e$, where $u$ is a unit in $R$ and $e$ is an idempotent in $R$. We discuss various properties of weakly clean ideals.

\bigskip

\noindent \small{\textbf{\textit{Key words:}} Clean ideals, weakly clean ideals, uniquely clean ideal, weakly uniquely clean ideal.} \\
\smallskip

\noindent \small{\textbf{\textit{$2010$ Mathematics Subject Classification:}}  16N40, 16U99.} \\
\smallskip

\bigskip

\section{INTRODUCTION}
$\mbox{\hspace{.5cm}}$
Here rings $R$ are associative with unity unless otherwise indicated. The Jacobson radical, set of units, set of idempotents and centre of a ring $R$ are denoted by $J(R)$, $U(R)$, $Idem(R)$ and $C(R)$ respectively. Nicholson\cite{nicholson1977lifting} called an element $x$ of a ring $R$, a clean element, if $x=e+u$ for some $e\in Idem(R)$, $u\in U(R)$ and called the ring $R$ as clean ring if all its elements are clean. Weakening the condition of clean element, M.S. Ahn and D.D. Anderson\cite{ahn2006weakly} defined  an element $x$ as weakly clean if $x$ can be expressed as $x=u+e$ or $x=u-e$, where $u\in U(R)$, $e\in Idem(R)$. H. Chen and M. Chen\cite{chen2003clean}, introduced the concept of clean ideals as follows: an ideal $I$ of a ring $R$ is called clean ideal if for any $x\in I$, $x=u+e$, for some $u\in U(R)$ and $e\in Idem(R)$. Motivated by these ideas we define an ideal $I$ of a ring $R$ as weakly clean ideal if for any $x\in I$, $x=u+e$ or $x=u-e$, where $u\in U(R)$ and  $e\in Idem(R)$. Also an ideal $I$ of a ring $R$ is called uniquely weakly clean ideal if for each $a\in I$, there exists unique idempotent $e$ in $R$ such that $a-e\in U(R)$ or $a+e\in U(R)$. We discuss some interesting properties of weakly clean ideals. 
\section{Weakly clean ideals}
\begin{Def}
  An ideal $I$ of a ring $R$ is called weakly clean ideal in case every element in $I$ is a sum or difference of a unit and an idempotent of $R$.
\end{Def}
Clearly every ideal of a weakly clean ring is weakly clean ideal. But there exists non weakly clean rings which contains some weakly clean ideals. Let $R_1$ be weakly clean ring and $R_2$ be non weakly clean ring. Then $R=R_1\oplus R_2$ is not a weakly clean ring. But clearly $I=R_1\oplus 0$ is weakly clean ideal of $R$.
\begin{Lem}
  If every proper ideal of a ring $R$ is clean(weakly clean) ideal then the ring $R$ is also clean(weakly clean) ring.
\end{Lem}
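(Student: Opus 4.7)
The plan is straightforward: take an arbitrary $x\in R$ and produce the required decomposition, splitting into the case that $x$ is a unit (handled trivially) and the case that $x$ is a non-unit (handled by exhibiting a proper ideal containing $x$ and invoking the hypothesis).

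\smallskip

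\textbf{Step 1 (units).} If $x\in U(R)$, then $x = x + 0$ with $0\in \idem(R)$ and $x\in U(R)$. This is simultaneously a clean and a weakly clean decomposition, so there is nothing more to check. Hence from now on we may assume $x$ is a non-unit.

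\smallskip

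\textbf{Step 2 (non-units).} The goal is to place $x$ inside some proper ideal of $R$. In the commutative setting the principal ideal $xR$ works, since $xR=R$ would force $x\in U(R)$, contrary to assumption. More generally one can try $I:=RxR$. By hypothesis $I$ is clean (respectively weakly clean), so applying the definition to $x\in I$ yields $x=u+e$ (respectively $x=u+e$ or $x=u-e$) with $u\in U(R)$ and $e\in\idem(R)$, which is exactly what is to be proved.

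\smallskip

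\textbf{Main obstacle.} The delicate point is Step 2 in the noncommutative case: one needs every non-unit $x$ to lie in some proper two-sided ideal. This can fail if $x$ is ``full'', i.e.\ $RxR=R$, which happens precisely when $R$ is simple and $x\neq 0$. In that situation the only proper ideal is $(0)$, the hypothesis is vacuous, and one would have to argue that $R$ is clean (or weakly clean) directly. The most natural reading of the lemma is therefore either that $R$ is commutative (or, say, has the property that every non-unit generates a proper two-sided ideal), in which case the argument above goes through verbatim. Assuming this, the proof amounts to nothing more than the two short observations above.
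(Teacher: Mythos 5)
Your argument is essentially the paper's own proof: units are trivially clean (here $x=x+0$), and a non-unit $x$ lies in the proper ideal $\langle x\rangle$, to which the hypothesis applies. The obstacle you flag in Step 2 is real, and it is present in the paper's proof as well, since the paper simply asserts that $\langle x\rangle$ is proper for a non-unit $x$; in a noncommutative ring a non-unit can generate $R$ as a two-sided ideal (e.g.\ in a simple ring such as the first Weyl algebra, whose only proper ideal $\{0\}$ is clean while the ring itself is not), so the statement really does need a commutativity-type hypothesis or a reading of ``ideal'' under which non-units always lie in proper ideals. So your proof matches the intended argument, and your caveat identifies a genuine gap in the lemma as literally stated rather than a defect of your own approach.
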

\begin{proof}
  Clearly every unit of a ring is clean. Let $x\in R\setminus U(R)$ then the ideal $<x>$ is proper ideal of $R$, so $x$ is clean in $R$.
\end{proof}
\begin{Cor}
  $R$ is clean(weakly clean) \textit{if and only if} every proper ideal of $R$ is clean(weakly clean).
\end{Cor}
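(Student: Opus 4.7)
The plan is to prove the biconditional by handling each direction separately, with the reverse implication being exactly the content of the preceding lemma and the forward implication being essentially immediate from the definitions.

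For the reverse direction ($\Leftarrow$), I would simply invoke the lemma just established: if every proper ideal of $R$ is clean (respectively weakly clean), then $R$ itself is a clean (respectively weakly clean) ring. Nothing further is required here.

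For the forward direction ($\Rightarrow$), suppose $R$ is clean (respectively weakly clean), and let $I$ be an arbitrary proper ideal of $R$. Pick any $x \in I$. Since $x \in R$ and $R$ is clean, we can write $x = u + e$ with $u \in U(R)$ and $e \in \idem(R)$; in the weakly clean case we instead obtain $x = u + e$ or $x = u - e$ with $u \in U(R)$, $e \in \idem(R)$. In either case this exhibits $x$ as a sum or difference of a unit and an idempotent of $R$, which is precisely the defining condition for $I$ to be a (weakly) clean ideal. Hence every proper ideal of $R$ is (weakly) clean.

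There is no real obstacle here; the statement is a direct packaging of the previous lemma together with the trivial observation that in a (weakly) clean ring every element, and in particular every element of any ideal, admits the required decomposition. The only point worth noting is that the definitions place the unit $u$ and the idempotent $e$ in $R$ rather than in $I$, so restricting attention to elements of a proper ideal $I$ imposes no additional constraint on the decomposition.
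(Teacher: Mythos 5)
Your proof is correct and matches the paper's intent exactly: the paper states this corollary without a separate proof, relying on the preceding lemma for the reverse direction and on the trivial fact that in a (weakly) clean ring every element of any ideal already has the required decomposition with the unit and idempotent taken in $R$. Nothing is missing.
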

The following is an example of weakly clean ideal which is not an clean ideal.
\begin{example}
  For the ring $R=\mathbb{Z}_{(3)}\cap \mathbb{Z}_{(5)}$, the ideal $<\frac{2}{11}>$ generated by $\frac{2}{11}$ is weakly clean ideal but not a clean ideal of $R$.
\end{example}
Following H. Chen and M. Chen\cite{chen2003clean}, we define weakly exchange ideal as follows:
\begin{Def}
  An ideal $I$ of a ring $R$ is called a weakly exchange ideal provided that for any $x\in I$, there exists an idempotent $e\in I$ such that $e-x\in R(x-x^2)$ or $e+x\in R(x+x^2)$.
\end{Def}
\begin{Lem}
  Every weakly clean ideal of a ring is a weakly exchange ideal.
\end{Lem}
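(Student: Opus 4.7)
\medskip
\noindent\textbf{Proof plan.}
The plan is to adapt Nicholson's classical argument that a clean element is an exchange element, handling the two signs in the weakly clean decomposition separately and exploiting that $Rx\subseteq I$ automatically whenever $x\in I$. For any $x\in I$, the weakly clean hypothesis supplies $u\in U(R)$ and an idempotent $f$ with either $x=u+f$ or $x=u-f$; I will treat these two cases symmetrically.

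In the first case $x=u+f$, I would define
\[
e\;:=\;x+u^{-1}(x-x^{2}).
\]
Substituting $x=u+f$ and simplifying with $f^{2}=f$, this expression collapses to $e=1-u^{-1}fu$, and since $u^{-1}fu$ is a conjugate of the idempotent $f$ and hence again idempotent, $e$ itself is idempotent. By construction $e-x=u^{-1}(x-x^{2})\in R(x-x^{2})$, which is precisely the first alternative in the definition of weakly exchange ideal. Moreover $e-x\in Rx(1-x)\subseteq Rx\subseteq I$, so $e=x+(e-x)\in I$, giving the idempotent in $I$ that the definition demands.

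For the second case $x=u-f$, I would apply the construction of the first case to $-x$: since $-x=(-u)+f$ is an ordinary clean decomposition with unit $-u$ and idempotent $f$, the first case produces an idempotent $e\in I$ with
\[
e-(-x)\;\in\;R\bigl((-x)-(-x)^{2}\bigr)\;=\;R(-x-x^{2})\;=\;R(x+x^{2}),
\]
i.e.\ $e+x\in R(x+x^{2})$, matching the second alternative of the definition.

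The only nontrivial step is the algebraic identity $x+u^{-1}(x-x^{2})=1-u^{-1}fu$ when $x=u+f$; expanding $x-x^{2}=(u+f)(1-u-f)$ and collecting terms with $f^{2}=f$ makes this a short direct computation, so I do not anticipate any real obstacle beyond this routine verification.
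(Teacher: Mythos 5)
Your proof is correct and essentially the same as the paper's: the idempotent you build, $1-u^{-1}fu=u^{-1}(1-f)u$, is exactly the conjugate idempotent the paper uses (citing Chen--Chen's Lemma 1.2 for the case $x=u+f$), and your sign-flip reduction of the case $x=u-f$ to the clean case produces the identical element that the paper verifies directly via $u(x+f)=x^2+x$. Your explicit check that the idempotent lies in $I$ is a point the paper leaves implicit, but the argument is the same.
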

\begin{proof}
  Let $I$ be a weakly clean ideal of $R$ and $x\in I$. Then $x=u+e$ or $x=u-e$, where $u\in U(R)$ and $e\in Idem(R)$. If $x=u+e$ then by Lemma 1.2 \cite{chen2003clean}, $x$ satisfies the exchange property. If $x=u-e$ then consider $f=u^{-1}(1-e)u\,\,$ so that $f^2=f$. Now $u(x+f)=x^2+x,\,\,$ so $x+f\in R(x^2+x)$.
\end{proof}
\begin{Thm}
  Let $R$ be a ring and $I$ an ideal in which every idempotent is central. Then the following are equivalent:
  \begin{enumerate}
    \item $I$ is weakly clean ideal.
    \item $I$ is weakly exchange ideal.
  \end{enumerate}
\end{Thm}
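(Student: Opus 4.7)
The plan is to prove (ii) $\Rightarrow$ (i), since (i) $\Rightarrow$ (ii) is Lemma 2.4. So fix $x\in I$ and an idempotent $e\in I$ supplied by the weakly exchange property: either $e-x\in R(x-x^2)$ (``Case~1'') or $e+x\in R(x+x^2)$ (``Case~2''). I would first reduce Case~2 to Case~1 by setting $y=-x$: then $y-y^2=-(x+x^2)$, so the relation $e+x=sx(1+x)$ rewrites as $e-y\in R(y-y^2)$, i.e., Case~1 for $y$ with the same $e$. If Case~1 produces a clean decomposition $y=u+f$, then $x=-u-f=(-u)-f$, so $x$ is weakly clean.

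For Case~1, write $e=x+sx(1-x)=(1+s(1-x))x$. Since $e\in I$ is central, $R=eR\oplus(1-e)R$ and $eR\subseteq I$. The element $z:=e(1+s(1-x))\in eR$ satisfies $z\cdot ex=e$, so $ex$ has a left inverse in $eR$. In any ring whose idempotents are central, a left-invertible element is a unit: if $ba=1$ then $e':=ab$ is a central idempotent, and $a=e'a$, $b=be'$ both lie in the two-sided ideal $e'R$, so $1=ba\in e'R$ forces $e'=1$. The idempotents of $eR$ lie in $I$ and are therefore central by hypothesis, so $ex$ is a unit of $eR$. The delicate side is $(1-e)R$, which need not be contained in $I$: letting $y':=(1-e)(1-x)$ and $A:=(1-e)(1-sx)$, the identity $(1-sx)(1-x)=1-e$ gives $Ay'=1-e$, but $y'$ need not be invertible in $(1-e)R$.

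To bypass this I would introduce $f'':=y'A\in(1-e)R$ and its complement $g:=(1-e)-f''$. Expanding and using centrality of $e$ yields $g=(1-e)x+(1-e)sx-(1-e)xsx$; each summand is in $I$ because $x\in I$, so $g\in I$. As an idempotent of $I$, $g$ is central by hypothesis, hence $f''=(1-e)-g$ is central as well. Now $e,f'',g$ are pairwise orthogonal central idempotents summing to $1$, giving the three-factor splitting $R=eR\oplus f''R\oplus gR$. In this splitting: $ex$ is a unit of $eR$; $gx=g$, the identity of $gR$, by a short calculation using $(1-e)sx(1-x)=-(1-e)x$ (obtained by multiplying $e-x=sx(1-x)$ by $1-e$); and $f''x=f''-y'$, with $y'\in f''R$ a unit whose two-sided inverse is $f''A$ (read off from $y'A=f''$ and $f''y'=y'$). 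Taking $f:=f''$ and $u:=x-f$, every factor-component of $u$ is a unit, so $u\in U(R)$ and $x=u+f$ is a clean (hence weakly clean) decomposition. The step I expect to require real work is locating the refinement $f''$ and verifying $g\in I$; once that is secured, the centrality hypothesis and the product decomposition handle the rest.
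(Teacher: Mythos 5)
Your argument is correct, but it takes a genuinely different route from the paper's. The paper proves $(2)\Rightarrow(1)$ by rewriting the exchange condition as $e\in Rx$ with $1-e\in R(1-x)$ or $1-e\in R(1+x)$, quoting Theorem 1.3 of Chen--Chen for the first alternative, and handling the second by a short direct computation: with $e=ax$, $1-e=b(1+x)$, $ea=a$, $(1-e)b=b$, it checks $(a+b)(x+(1-e))=1$ and writes $x=(x+(1-e))-(1-e)$ as a unit minus an idempotent. You instead fold the ``minus'' case into the ``plus'' case by replacing $x$ with $y=-x$ (using $R(x+x^2)=R(y-y^2)$ and the negation-symmetry of weak cleanness), and then give a self-contained proof of the ``plus'' case: from $e=x+sx(1-x)$ you refine the Peirce splitting to $1=e+f''+g$ with $f''=y'A$, $y'=(1-e)(1-x)$, $A=(1-e)(1-sx)$, observe that $g=(1-e)(x+sx-xsx)$ lies in $I$ and is therefore a central idempotent, and verify the three components of $u=x-f''$ are units ($ex$ via your left-invertible-implies-unit remark in $eR\subseteq I$, $-y'$ with explicit two-sided inverse in $f''R$, and the identity $g$ of $gR$), giving $x=u+f''$ clean. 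I checked the key identities ($(1-sx)(1-x)=1-e$, $Ay'=1-e$, $g\in I$ idempotent, $g(1-x)=0$, $f''(1-x)=y'$, $y'(f''A)=(f''A)y'=f''$) and they all hold, and the reduction legitimately reuses the same idempotent $e\in I$ for $-x\in I$. What your route buys is self-containedness and explicitness: it does not invoke the clean-ideal exchange theorem, and it produces two-sided inverses, whereas the paper passes from the one-sided relation $(a+b)(x+(1-e))=1$ to ``unit'' and asserts centrality of idempotents such as $(1+x)b$ without detailing why they fall under the hypothesis on $I$. What the paper's route buys is brevity: granting the cited theorem, the remaining case is a few lines. (For the forward implication you cite the preceding lemma, which is Lemma 2.6 in the paper's numbering.)
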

\begin{proof}
  $(1)\Rightarrow (2)$ is clear by Lemma 2.6.\\
  $(2)\Rightarrow (1)$ Given any $x\in I$, we have an idempotent $e\in Rx$ such that $1-e\in R(1-x)$ or $1-e\in R(1+x)$. If $1-e\in R(1-x)$ then by Theorem 1.3 \cite{chen2003clean}, $x$ is clean element. Suppose, $1-e \in R(1+x)$ then $e=ax$ and $1-e=b(1+x)$, for some $a,b\in R$. Assume that $ea=a$ and $(1-e)b=b$ so that $axa=ea=a\,$ and $b(1+x)b=b$. Here $ax,\,xa,\,b(1+x),\,(1+x)b\,$ all are central idempotents and $ax=(ax)(ax)=(ax)(xa)=x(ax)a=xa$, similarly $(1+x)b=b(1+x)$. Now $(a+b)(x+(1-e))=ax+bx+a(1-e)+b(1-e)=1\,$ so $x+(1-e)\,$ is a unit. Hence $x$ is a weakly clean element.
\end{proof}
\begin{Cor}
  Every weakly exchange ideal of a ring without nonzero nilpotent elements is a weakly clean ideal.
\end{Cor}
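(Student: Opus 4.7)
The plan is to reduce the statement immediately to Theorem 2.7. That theorem shows, under the hypothesis that every idempotent of the ideal $I$ is central in $R$, the two notions "weakly clean ideal" and "weakly exchange ideal" coincide. So it suffices to verify the centrality hypothesis whenever $R$ has no nonzero nilpotent elements. In fact one gets the stronger statement that every idempotent of $R$ (not just those landing in $I$) is central in $R$.

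To prove this classical reducedness lemma, I would take any $e \in \mathrm{Idem}(R)$ and any $r \in R$ and examine the two elements
\[
a = er - ere, \qquad b = re - ere.
\]
A direct computation gives $ea = a$ and $ae = 0$, hence $a^2 = a(er - ere) = (ae)(r-re) = 0$. Since $R$ is reduced this forces $a = 0$, i.e.\ $er = ere$. Symmetrically $be = b$ and $eb = 0$ yields $b^2 = 0$ and therefore $re = ere$. Combining, $er = re$ for every $r \in R$, so $e \in C(R)$.

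Once this is established the corollary is instant: if $I$ is a weakly exchange ideal of a ring $R$ without nonzero nilpotent elements, then every idempotent of $R$ (and in particular every idempotent of $I$) is central, so Theorem 2.7 applies and $I$ is a weakly clean ideal.

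The only real content is the reducedness-implies-central-idempotents step, and the obstacle there is purely notational — keeping track of which side of $e$ one multiplies on. Everything else is formal invocation of Theorem 2.7.
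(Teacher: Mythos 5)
Your proposal is correct and is exactly the route the paper intends: the corollary is stated as an immediate consequence of Theorem 2.7 once one knows that a ring without nonzero nilpotent elements has all idempotents central, which is the classical fact you verify. Your computation with $a=er-ere$ and $b=re-ere$ (using $ae=0$, $ea=a$, $eb=0$, $be=b$ and reducedness) is a standard and valid proof of that fact, so nothing is missing.
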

\begin{Lem}
  Let $R$ be a commutative ring and let $n\geq 1$. If $A\in \mathbb{M}_n(R)$ and $x\in R$, then $det(xE_{ij}+A)=xA_{ij}+det(A)$.
\end{Lem}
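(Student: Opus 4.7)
The plan is to exploit the multilinearity of the determinant with respect to a single column (or row). First, I would observe that the matrix $xE_{ij}+A$ differs from $A$ only in its $j$-th column: that column is $a_j+xe_i$, where $a_j$ is the original $j$-th column of $A$ and $e_i$ is the standard basis vector with $1$ in position $i$. All other columns remain unchanged.

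Next, by multilinearity of the determinant in the $j$-th column, I would split
\begin{equation*}
\det(xE_{ij}+A)=\det[a_1,\ldots,a_{j-1},a_j,a_{j+1},\ldots,a_n]+x\cdot\det[a_1,\ldots,a_{j-1},e_i,a_{j+1},\ldots,a_n].
\end{equation*}
The first summand is simply $\det(A)$. For the second summand, I would perform a Laplace (cofactor) expansion along the $j$-th column of the matrix whose $j$-th column is $e_i$; since $e_i$ has only one nonzero entry (namely $1$ in row $i$), the expansion collapses to the single term $(-1)^{i+j}$ times the $(i,j)$-minor of $A$, which is precisely the $(i,j)$ cofactor $A_{ij}$.

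Combining the two pieces yields $\det(xE_{ij}+A)=\det(A)+xA_{ij}$, as required. There is no real obstacle here; the argument is a one-line application of column-multilinearity followed by the standard cofactor expansion, and commutativity of $R$ is only needed so that the usual multilinear/cofactor theory of determinants applies in $\mathbb{M}_n(R)$.
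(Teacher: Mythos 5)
Your argument is correct: $xE_{ij}+A$ differs from $A$ only in the $j$-th column, multilinearity in that column splits the determinant into $\det(A)$ plus $x$ times the determinant of the matrix with $e_i$ in column $j$, and cofactor expansion along that column collapses the latter to the $(i,j)$ cofactor $A_{ij}$ (the sign $(-1)^{i+j}$ being absorbed into the cofactor, as the statement intends). Note, however, that the paper itself gives no argument at all here --- its ``proof'' is simply a citation to Lemma 7 of Ko\c{s}an, Sahinkaya and Zhou \cite{kocsan2016weakly} --- so your contribution is a self-contained verification of the cited fact rather than a rival method; the reasoning you use (column linearity plus Laplace expansion, with commutativity of $R$ invoked only so that the usual determinant theory applies) is the standard one and is essentially what the cited source does. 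The only stylistic caution is to state explicitly that $A_{ij}$ denotes the signed cofactor rather than the $(i,j)$-minor, since the lemma's formula is false if $A_{ij}$ were read as the entry of $A$.
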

\begin{proof}
  See Lemma 7 \cite{kocsan2016weakly}.
\end{proof}
T. Ko{\c{s}}an, S. Sahinkaya and Y. Zhou\cite{kocsan2016weakly}, proved that  for a commutative ring $R$ and $n \geq 2$, $\mathbb{M}_n(R)$ is weakly clean if and only if $R$ is clean. Motivated by this result we generalise the similar result for weakly clean ideals of $\mathbb{M}_n(R)$ as follows:

\begin{Thm}
  Let $I$ be an ideal of a commutative ring $R$ and let $n\geq 2$. Then $\mathbb{M}_n(I)$ is weakly clean ideal of $\mathbb{M}_n(R)$ \textit{if and only if } $I$ is a clean ideal of $R$.
\end{Thm}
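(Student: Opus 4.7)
The proof proceeds in two directions, the second being the substantive one.

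For sufficiency, I plan to establish the stronger assertion that $\mathbb{M}_n(I)$ is a \emph{clean} ideal of $\mathbb{M}_n(R)$, whence weakly clean. This parallels the classical fact that $R$ clean implies $\mathbb{M}_n(R)$ clean. Given $A \in \mathbb{M}_n(I)$, I would induct on $n$: since each entry of $A$ lies in $I$ and therefore admits a clean decomposition in $R$, elementary row and column operations over $\mathbb{M}_n(R)$ (which are unit multiplications) let one bring the $(1,1)$-entry into unit-plus-idempotent form and reduce the remaining $(n-1)\times(n-1)$ block inductively. The technical steps are modelled on H.~Chen and M.~Chen's matrix-ideal arguments in \cite{chen2003clean}.

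For necessity, fix $x \in I$ and take $A := xI_n \in \mathbb{M}_n(I)$. By hypothesis, $A = V + E$ or $A = V - E$ for some $V \in \U(\mathbb{M}_n(R))$ and $E \in \idem(\mathbb{M}_n(R))$. Since $R$ is commutative and $E^2 = E$, one has $R^n = \mathrm{Im}(E) \oplus \ker(E)$ as a direct sum of finitely generated projective modules, and the rank function of $\mathrm{Im}(E)$ is locally constant on $\mathrm{Spec}\,R$. This gives an orthogonal decomposition $1 = e_0 + e_1 + \cdots + e_n$ of $1$ into central idempotents with $\mathrm{Im}(E)$ of constant rank $i$ on $R_i := e_i R$; consequently, on each $R_i$,
\[
  \det(xI_n - E) = (x-1)^i x^{n-i}, \qquad \det(xI_n + E) = (x+1)^i x^{n-i}.
\]
In the case $A = V + E$, the invertibility of $V$ forces $(x-1)^i x^{n-i}$ to be a unit in $R_i$ for every $i$, and from this one verifies that $x - e_n \in \U(R)$; hence $x = e_n + (x - e_n)$ is the desired clean decomposition of $x$.

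The main obstacle is the case $A = V - E$, where the analogous analysis produces only $x + e_n \in \U(R)$ and so yields only a weakly clean decomposition of $x$ with the opposite sign. To upgrade this to a clean decomposition, I would apply the weakly clean hypothesis to a second matrix in $\mathbb{M}_n(I)$ and invoke Lemma~2.9. A natural candidate is $A' := x E_{11}$, for which Lemma~2.9 gives
\[
  \det(x E_{11} \pm E') = \pm x\, C_{11}(E') + \det(\pm E'),
\]
where $C_{11}(E')$ denotes the $(1,1)$-cofactor of the idempotent $E'$ and $\det(E')$ is itself an idempotent of $R$. Combining this identity with the relations forced on the entries of $E'$ by $(E')^2 = E'$ should eliminate the sign ambiguity and produce a single idempotent $e \in R$ such that $x - e \in \U(R)$.
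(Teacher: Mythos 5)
Your sufficiency direction is fine (and matches the paper, which simply quotes Theorem 1.9 of Chen--Chen to get that $\mathbb{M}_n(I)$ is even a clean ideal), and your analysis of the case $A=V+E$ via the constant-rank decomposition of the idempotent $E$ is correct. But the necessity argument has a genuine gap exactly where you flag it, and the repair you propose cannot close it. From the weak cleanness of your two witness matrices $xI_n$ and $xE_{11}$ one simply cannot deduce that $x$ is clean: whenever $x+1\in U(R)$ one has $xI_n=(x+1)I_n-I_n$ and $xE_{11}=\mathrm{diag}(x+1,1,\dots,1)-I_n$, so both witnesses are a unit minus an idempotent no matter what $x$ is. Concretely, for $x=10$ in $R=\mathbb{Z}_{(3)}\cap\mathbb{Z}_{(5)}$ the elements $10$ and $9$ are non-units while $11$ is a unit, so $x$ is not clean, yet both $xI_n$ and $xE_{11}$ are weakly clean in $\mathbb{M}_n(R)$. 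Hence no manipulation of Lemma 2.9 applied to $xE_{11}$ can ``eliminate the sign ambiguity''; the information extracted from these two matrices is already compatible with $x$ being only weakly clean.

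The missing idea, which is how the paper (following Theorem 8 of Ko\c{s}an--Sahinkaya--Zhou) proceeds, is to apply the hypothesis to a matrix of $\mathbb{M}_n(I)$ that is conjugate to its own negative, namely $A=xE_{11}-xE_{22}$: conjugating by the permutation matrix that interchanges the first two basis vectors sends $A$ to $-A$, so any expression $A=V-E$ converts into an expression of $A$ as a unit plus an idempotent, and the $\pm$ ambiguity disappears at the level of the witness itself. The paper then reduces by Zorn's lemma to an indecomposable quotient $R/I_1$ in which $\overline{x}$ is still not clean and invokes the determinant analysis of Theorem 8 of the cited paper (resting on Lemma 2.9) to conclude that $A$ could not be weakly clean there, a contradiction. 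If you prefer a self-contained argument in your own style, your rank-decomposition computation does go through for this witness: on the piece where $E$ has rank strictly between the extremes or rank $0$ one finds $\det(A\mp E)$ is a multiple of $x$, forcing $x$ to be a unit there, while on the full-rank piece it forces $x-1$ (and $x+1$) to be units, and patching the resulting idempotents gives $x-e\in U(R)$. The essential correction is the choice of witness matrix, not the determinant bookkeeping.
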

\begin{proof}
  Let $I$ be a clean ideal of $R$ then by Theorem 1.9 \cite{chen2003clean}, $\mathbb{M}_n(I)$ is clean ideal of $\mathbb{M}_n(R)$.\\
  Conversely, Let $\mathbb{M}_n(I)$ is weakly clean ideal of $\mathbb{M}_n(R)$. If possible, assume that $I$ is not clean ideal of $R$. Then there exists $x\in I$ such that $x\neq u+e$, for any $e\in Idem(R)$ and $u\in U(R)$. Consider $\mho=\{J\vartriangleleft R\,\,:\,\, \overline{x}\in R/J$ is not clean  $\}$. Notice that $\mho$ is non empty and $\mho$ is inductive set, so by Zorn's Lemma, $\mho$ contains a maximal member, say $I_1$. The maximality of $I_1$ implies that $R/I_1$ is an indecomposable ring. So $R/I_1$ is an indecomposable ring and $\overline{x}\in R/I_1$ is not clean. \par
  For contradicting the assumption we show that $A=xE_{11}-xE_{22}$ is not weakly clean in $\mathbb{M}_n(R)$. By Theorem 8 \cite{kocsan2016weakly}, it is clear that $A\in \mathbb{M}_n(R)$ is not weakly clean in $\mathbb{M}_n(R)$. Hence $I$ is clean ideal of $R$.
\end{proof}
\begin{Thm}
  Let $\{R_{\alpha}\}$ be a family of rings and $I_{\alpha}'s$ are ideals of $R_{\alpha}$, then the ideal $I=\prod I_{\alpha}$ of $R=\prod R_{\alpha}$ is weakly clean ideal \textit{if and only if} each $I_{\alpha}$ is weakly clean ideal of $\{R_{\alpha}\}$ and at most one $I_{\alpha}$ is not clean ideal.
\end{Thm}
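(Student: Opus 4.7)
The plan rests on the observation that in any ring, an element $x$ is weakly clean precisely when $x$ is clean or $-x$ is clean, since $x = u - e$ rewrites as $-x = (-u) + e$ with $-u$ again a unit. Combined with the fact that any ideal is closed under negation, this turns the desired statement into: for every $x = (x_\alpha) \in \prod I_\alpha$ there is a uniform sign $\varepsilon \in \{+1,-1\}$ with $\varepsilon x_\alpha$ clean in $R_\alpha$ for every $\alpha$.

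For the forward direction, I would first verify that each $I_\alpha$ is weakly clean by embedding $x_\alpha \in I_\alpha$ as the tuple supported only at coordinate $\alpha$, applying the weakly clean decomposition in $I$, and reading off the $\alpha$-th coordinate. For the ``at most one is not clean'' half, I would argue by contradiction. If two distinct factors $I_\beta$ and $I_\gamma$ both failed to be clean, pick non-clean witnesses $y_\beta \in I_\beta$ and $y_\gamma \in I_\gamma$ and test the element $x \in I$ whose only nonzero entries are $y_\beta$ at $\beta$ and $-y_\gamma$ at $\gamma$. A decomposition $x = u+e$ in $I$ would give $y_\beta = u_\beta + e_\beta$ clean in $R_\beta$; a decomposition $x = u-e$ would give $-y_\gamma = u_\gamma - e_\gamma$, i.e.\ $y_\gamma = e_\gamma + (-u_\gamma)$ clean in $R_\gamma$. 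Either case contradicts the choice of witnesses.

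For the converse, assume each $I_\alpha$ is weakly clean and at most one, say $I_{\alpha_0}$, fails to be clean. Given $x = (x_\alpha) \in I$, the component $x_{\alpha_0}$ is either clean or has $-x_{\alpha_0}$ clean. In the first case, combine this with clean decompositions of each $x_\alpha$ (available because $I_\alpha$ is clean for $\alpha \neq \alpha_0$) to assemble $x = u+e$. In the second, use that $I_\alpha$ is a clean ideal closed under negation for every $\alpha \neq \alpha_0$, so $-x_\alpha$ is also clean there; assembling these with the clean decomposition of $-x_{\alpha_0}$ yields $-x = u'+e'$, and hence $x = (-u')-e'$ is a weakly clean decomposition.

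The main conceptual obstacle is the sign asymmetry in the weakly clean decomposition: a naive coordinate-wise use of weakly cleanness could produce $x_\alpha = u_\alpha + e_\alpha$ at some coordinates and $x_{\alpha'} = u_{\alpha'} - e_{\alpha'}$ at others, which cannot be patched into a single global $u \pm e$. This is exactly why the hypothesis must upgrade all but one factor to a \emph{clean} ideal; using closure of ideals under negation forces the sign to be chosen uniformly. Dually, the forward direction's opposite-sign test element $(y_\beta, -y_\gamma, 0, \ldots)$ is the device that traps both candidate global signs simultaneously.
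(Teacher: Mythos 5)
Your proposal is correct and follows essentially the same route as the paper's proof: each $I_\alpha$ inherits weak cleanness from $I$ coordinatewise, a two-coordinate test element (one entry obstructing any $u+e$ decomposition, the other obstructing any $u-e$ decomposition) rules out two non-clean factors, and in the converse the sign of the decomposition at the exceptional index $\alpha_0$ is imposed uniformly at all other coordinates using cleanness of those $I_\alpha$. Your explicit reformulation that $x$ is weakly clean precisely when $x$ or $-x$ is clean, together with closure of ideals under negation, merely makes precise the sign-matching step that the paper leaves implicit.
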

\begin{proof}
  Let $I$ be weakly clean ideal of $R$. Then being homomorphic image of $I$ each $I_{\alpha}$ is weakly clean ideal of $R_{\alpha}$. Suppose $I_{\alpha_1}$ and $I_{\alpha_2}$ are not clean ideal, where ${\alpha}_1\neq {\alpha}_2$. Since $I_{\alpha_1}$ is not clean ideal, so not all elements $x\in I_{\alpha_1}$ is of the form $x=u-e$, where $u\in U(R_{\alpha_1})$ and $e\in Idem(R_{\alpha_1})$. As $I_{\alpha_1}$ is weakly clean ideal of $R_{\alpha_1}$, so there exists $x_{\alpha_1}\in I_{\alpha_1}$ with $x_{\alpha_1}=u_{\alpha_1}+e_{\alpha_1}$, where $u_{\alpha_1}\in U(R_{\alpha_1})$ and $e_{\alpha_1}\in Idem(R_{\alpha_1})$, but $x_{\alpha_1}\neq u-e$, for any $u\in U(R_{\alpha_1})$ and $e\in Idem(R_{\alpha_1})$. Similarly there exists $x_{\alpha_2}\in I_{\alpha_2}$ with $x_{\alpha_2}=u_{\alpha_2}-e_{\alpha_2}$, where $u_{\alpha_2}\in U(R_{\alpha_2})$ and $e_{\alpha_2}\in Idem(R_{\alpha_2})$, but $x_{\alpha_2}\neq u+e$, for any $u\in U(R_{\alpha_2})$ and $e\in Idem(R_{\alpha_2})$. Define $x=(x_\alpha)\in I$ by \begin{align*}
    x_\alpha &=x_{\alpha}\,\, \,\,\,\,\,\,\,if\,\, \alpha \in \{\alpha_1,\alpha_2\} \\
             &=0 \,\,\,\,\,\,\,\,\,\,\,\,\,if \,\,\alpha \notin \{\alpha_1,\alpha_2\}
  \end{align*}
  Then clearly $x\neq u\pm e$, for any $u\in U(R)$ and $e\in Idem(R)$. Hence at most one $I_\alpha$ is not clean ideal.\par
  $(\Leftarrow)$ If each $I_\alpha$ is clean ideal of $R_{\alpha}$ then $I=\prod I_\alpha$ is clean ideal of $R$ and hence weakly clean ideal of $R$. Assume $I_{\alpha_0}$ is weakly clean ideal but not clean ideal of $I_{\alpha_0}$ and that all other $I_{\alpha}$'s are clean ideals of $R_{\alpha}$. If $x=(x_\alpha) \in I$ then in $I_{\alpha_0}$, we can write $x_{\alpha_0}=u_{\alpha_0}+e_{\alpha_0}$ or $x_{\alpha_0}=u_{\alpha_0}-e_{\alpha_0}$, where $u_{\alpha_0}\in U(R_{\alpha_0})$ and $e_{\alpha_0}\in Idem(R_{\alpha_0})$. If $x_{\alpha_0}=u_{\alpha_0}+e_{\alpha_0}$, then for $\alpha \neq \alpha_0$ let, $x_{\alpha}=u_{\alpha}+e_{\alpha}$ and if $x_{\alpha_0}=u_{\alpha_0}-e_{\alpha_0}$, then for $\alpha \neq \alpha_0$ let, $x_{\alpha}=u_{\alpha}-e_{\alpha}$ then $u=(u_\alpha)\in U(R)$ and $e=(e_\alpha)\in Idem(R)$, such that $x=u+e$ or $x=u-e$ and consequently $I$ is weakly clean ideal of R.
\end{proof}
Next we define the concept of uniquely weakly clean ideal of a ring.
\begin{Def}
  An ideal $I$ of a ring $R$ is called uniquely weakly clean ideal if for each $a\in I$, there exists a unique idempotent $e$ in $R$ such that $a-e\in U(R)$ or $a+e\in U(R)$.
\end{Def}
\begin{Lem}
  Every idempotent in a uniquely weakly clean ideal is a central idempotent.
\end{Lem}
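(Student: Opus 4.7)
The plan is, for a fixed idempotent $e \in I$ and an arbitrary $r \in R$, to prove that the two ``corner'' elements $a = (1-e)re$ and $b = er(1-e)$ both vanish; once this is done one gets $re = ere = er$, so $e$ lies in $C(R)$. Both $a$ and $b$ lie in $I$ because $I$ is two-sided and contains $e$, and a short calculation gives the identities $a^2 = b^2 = 0$, $ea = 0$, $ae = a$, $eb = b$, $be = 0$, which drive everything that follows.

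The only substantive move is to invoke the uniqueness assumption on the element $e \in I$ itself. The obvious weakly-clean partner of $e$ is $f_0 = 1 - e$, since $e + f_0 = 1 \in U(R)$. I would then exhibit a competitor $f_1 = 1 - e - a$: using $a^2 = 0$, $ea = 0$, $ae = a$ one verifies $f_1^2 = f_1$, and using only $a^2 = 0$ one has $(1-a)(1+a) = 1$, so $e + f_1 = 1 - a \in U(R)$ as well. Since $e \in I$, uniqueness of the idempotent appearing in its weakly-clean decomposition forces $f_0 = f_1$, that is, $a = 0$. The symmetric choice $f_2 = 1 - e - b$ is idempotent by the mirror calculation, and $e + f_2 = 1 - b \in U(R)$ because $(1-b)(1+b) = 1$; uniqueness again gives $b = 0$.

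The step I expect to be the real obstacle is spotting the correct competing idempotent $f_1 = 1 - e - a$: it is rigged so that \emph{both} its idempotence \emph{and} the unit-ness of $e + f_1$ rest solely on $a^2 = 0$ together with the absorption relations $ea = 0$, $ae = a$. Once one writes down $a = (1-e)re$, the four small checks ($a^2 = 0$; $f_1^2 = f_1$; $(1-a)(1+a) = 1$; and the mirror items for $b$) are mechanical. One minor point worth flagging when applying the definition: both $f_0$ and $f_1$ fall in the ``$+$'' branch of the ``$a - f \in U(R)$ or $a + f \in U(R)$'' disjunction, so the uniqueness clause (which is about the idempotent across both branches) collapses them to the single equation $f_0 = f_1$ with no case-splitting needed.
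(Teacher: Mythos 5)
Your proof is correct and is essentially the paper's argument: the paper likewise compares the competing idempotents $1-e$ and $1-e-ex(1-e)$ (resp.\ $1-e-(1-e)xe$), whose accompanying units $1$ and $1-ex(1-e)$ differ by the same square-zero corner element, and invokes uniqueness to kill it. The only cosmetic difference is that the paper applies the uniqueness clause to the element $-e\in I$ (the ``$a-f\in U(R)$'' branch) rather than to $e$ itself, which changes nothing of substance; your side remark that $a,b\in I$ is not even needed, since uniqueness is only invoked at $e$.
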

\begin{proof}
  Let $I$ be a uniquely weakly clean ideal of a ring $R$ and $e$ be any idempotent in $I$. For any $x\in R$, since $-e=-(e+ex(1-e))+ex(1-e)=(1-(e+ex(1-e)))-(1-ex(1-e))=(1-e)-1$, so $1-(e+ex(1-e))=1-e\Rightarrow ex=exe$. Similarly we can show that $xe=exe$. Hence $xe=ex$.
\end{proof}

A Morita context denoted by $(R,S,M,N,\psi,\phi)$ consists of two rings $R$ and $S$, two bimodules $_AN_B$ and $_BM_A$ and a pair of bimodule homomorphisms (called pairings) $\psi:N\otimes _SM\rightarrow R$ and $\phi:M\otimes _RN\rightarrow S$, which satisfies the following associativity: $\psi(n\otimes m)n'=n\phi(m\otimes n')$ and $\phi (m\otimes n)m'=m\psi(n\otimes m')$, for any $m,\,m'\in M$ and $n,\,n'\in N$. These conditions ensure that the set of matrices $\left(
                                                                 \begin{array}{cc}
                                                                   r & n \\
                                                                   m & s \\
                                                                 \end{array}
                                                               \right)$, where $r\in R$, $s\in S$, $m\in M$ and $n\in N$ forms a ring denoted by $T$, called the ring of the context.
H. Chen and M. Chen\cite{chen2003clean}, showed that for rings $R$ and $S$, if $T$ be the ring of Morita context $\left(
                                                                                                 \begin{array}{cc}
                                                                                                   R & M \\
                                                                                                   N & S \\
                                                                                                 \end{array}
                                                                                               \right)$ with zero pairing
  and $I$ and $J$ are clean ideals of  rings $R$ and $S$ respectively, then                                                         $\left(
                                                                                                                                   \begin{array}{cc}
                                                                                                                                     I & M \\
                                                                                                                                     N & J \\
                                                                                                                                   \end{array}
                                                                                                                                 \right)$
is a clean ideal of $T$. Here we prove the similar result for weakly clean ideal.
\begin{Thm}
  Let $T=\left(
           \begin{array}{cc}
             R & M \\
             N & S \\
           \end{array}
         \right)
  $ be a Morita context. If $I$ and $J$ be weakly clean ideals of $R$ and $S$ respectively and either $I$ or $J$ is clean ideal, then the ideal $\left(
                                                                                                                                    \begin{array}{cc}
                                                                                                                                      I & M \\
                                                                                                                                      N & J \\
                                                                                                                                    \end{array}
                                                                                                                                  \right)$
   is weakly clean ideal of $T$.
\end{Thm}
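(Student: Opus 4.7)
The plan is to take an arbitrary element $x = \begin{pmatrix} a & m \\ n & b \end{pmatrix}$ of $\begin{pmatrix} I & M \\ N & J \end{pmatrix}$ and construct an explicit decomposition $x = U + E$ or $x = U - E$ with $U$ a unit and $E$ an idempotent of $T$. By the symmetry of the hypothesis I may assume $J$ is the clean ideal, the case with $I$ clean being entirely analogous. The template, inherited from the Chen--Chen clean-ideal theorem quoted just above, is to take $E$ block-diagonal with idempotent entries from $R$ and $S$, so that the off-diagonal blocks $m,n$ of $x$ get absorbed into the unit part $U$.

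The observation that powers the weakly-clean generalisation is that a clean ideal automatically supplies decompositions of \emph{both} signs for each of its elements: cleanness applied to $b$ gives $b = u_S + e_S$, while cleanness applied to $-b \in J$ gives $-b = u'_S + e'_S$, whence $b = (-u'_S) - e'_S$ is a valid minus decomposition of $b$. Since $I$ is weakly clean and $a \in I$, either $a = u_R + e_R$ or $a = u_R - e_R$. In the first sub-case I would set $U = \begin{pmatrix} u_R & m \\ n & u_S \end{pmatrix}$, $E = \begin{pmatrix} e_R & 0 \\ 0 & e_S \end{pmatrix}$ to get $x = U + E$; in the second I would use the minus decomposition of $b$, set $U = \begin{pmatrix} u_R & m \\ n & -u'_S \end{pmatrix}$, $E = \begin{pmatrix} e_R & 0 \\ 0 & e'_S \end{pmatrix}$, and get $x = U - E$. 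Thus cleanness of $J$ is exactly what is needed to match whichever sign the weakly-clean decomposition of $a$ forces on us.

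What remains is the verification that $E \in \idem(T)$ and $U \in U(T)$. For $E$ this follows at once from its block-diagonal form: under the Morita multiplication the cross terms in $E^{2}$ contain a zero factor, so $E^{2} = E$. For $U$ one writes the inverse explicitly as $\begin{pmatrix} u_R^{-1} & -u_R^{-1} m u_S^{-1} \\ -u_S^{-1} n u_R^{-1} & u_S^{-1} \end{pmatrix}$ (with $-u'_S$ in place of $u_S$ in the second sub-case) and checks $UV = VU = 1_T$ by direct computation; this is the standard block-inverse formula for a Morita context with zero pairings, which is the setting tacitly inherited from the Chen--Chen result being generalised. The only conceptual step is the ``both signs'' observation about clean ideals; once that is in place, the rest of the proof is a routine transcription of Chen--Chen with a single extra case for the minus sign, and I do not foresee any serious obstacle to carrying it out.
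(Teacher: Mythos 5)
Your proposal matches the paper's proof essentially step for step: the paper likewise assumes without loss of generality that $J$ is the clean ideal, uses cleanness of $J$ to produce a decomposition of $b$ with the same sign as the weakly clean decomposition of $a$ (your explicit ``both signs'' remark is left implicit there), and takes the same block-diagonal idempotent $E$ together with the same unit $U=\left(\begin{smallmatrix} u & m \\ n & v \end{smallmatrix}\right)$ with the standard $2\times 2$ block inverse. Your aside that this inverse formula really lives in the zero-pairing setting inherited from Chen--Chen is accurate as well; the paper uses the same formula without comment.
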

\begin{proof}
  Without loss of generality, we can assume that $J$ is clean ideal of $S$. To show $\left(
                                                                                                                                   \begin{array}{cc}
                                                                                                                                     I & M \\
                                                                                                                                     N & J \\
                                                                                                                                   \end{array}
                                                                                                                                 \right)$
  is weakly clean ideal of $T$. Let $A=\left(
                                         \begin{array}{cc}
                                           a & m \\
                                           n & b \\
                                         \end{array}
                                       \right)
   \in \left(
          \begin{array}{cc}
            I & M \\
            N & J \\
          \end{array}
        \right)$, where $a\in I$, $b\in J$, $m\in M$ and $n\in N$. As $I$ is weakly clean ideal of $R$, so $a=e+u$ or $a=-e+u$, where $e\in Idem(R)$ and $u\in U(R)$.  \par $Case\,\,I$:  If $a=e+u$, then set $b=f+v$, where $f\in Idem(S)$ and $v\in U(S)$. Let, $E=\left(
                                                                                        \begin{array}{cc}
                                                                                          e & o \\
                                                                                          0 & f \\
                                                                                        \end{array}
                                                                                      \right)$
     and $U=\left(
              \begin{array}{cc}
                u & m \\
                n & v \\
              \end{array}
            \right)
     $. It is easy to verify that $E=E^2\in T$ and \begin{center}
                                                     $U\left(
                                                        \begin{array}{cc}
                                                          u^{-1} & -u^{-1}mv^{-1} \\
                                                          -v^{-1}nu^{-1} & v^{-1} \\
                                                        \end{array}
                                                      \right)=\left(
                                                        \begin{array}{cc}
                                                          u^{-1} & -u^{-1}mv^{-1} \\
                                                          -v^{-1}nu^{-1} & v^{-1} \\
                                                        \end{array}
                                                      \right)U=\left(
                                                                 \begin{array}{cc}
                                                                   1 & 0 \\
                                                                   0 & 1 \\
                                                                 \end{array}
                                                               \right)$.
                                                   \end{center}So $U\in  U(T).$\par

  $Case\,\, II$:  If $a=-e+u$, then we set $b=-f+v$, where $f\in Idem(S)$ and $v\in U(S)$. Let, $E=-\left(
                                                                                              \begin{array}{cc}
                                                                                                e & 0 \\
                                                                                                0 & f \\
                                                                                              \end{array}
                                                                                            \right)$   and $U=\left(
                                                                                                               \begin{array}{cc}
                                                                                                                 u & m \\
                                                                                                                 n & v \\
                                                                                                               \end{array}
                                                                                                             \right)$. Similar as above $E^2=E\in Idem(T)$
  and $U\in U(T)$.
\end{proof}
Let $A_1$, $A_2$ and $A_3$ be associative rings with identities and $A_{21}$, $A_{31}$ and $A_{32}$ be $(A_2,A_1)$-, $(A_3, A_1)$- and $(A_3, A_2)$-bimodules respectively. Let $\phi : A_{32}\otimes_{A_2} A_{21} \rightarrow A_{31} $ be an $(A_3, A_1)$-homomorphism then
$T=\left(
                            \begin{array}{ccc}
                              A_1 & 0 & 0 \\
                              A_{21} & A_2 & 0 \\
                              A_{31} & A_{32} & A_3\\
                            \end{array}
                          \right)$ is a lower triangular matrix ring with usual matrix operations.
\begin{Thm}
  If $I$, $J$ and $K$ are weakly clean ideals of rings $A_1$, $A_2$ and $A_3$ respectively, where at least two of them are clean ideals then the formal triangular matrix ideal $\left(
                            \begin{array}{ccc}
                              I & 0 & 0 \\
                              A_{21} & J & 0 \\
                              A_{31} & A_{32} & K \\
                            \end{array}
                          \right)$ is a weakly clean ideal of $\left(
                            \begin{array}{ccc}
                              A_1 & 0 & 0 \\
                              A_{21} & A_2 & 0 \\
                              A_{31} & A_{32} & A_3\\
                            \end{array}
                          \right)$.
\end{Thm}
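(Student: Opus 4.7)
The plan is to reduce the $3\times 3$ triangular case to the $2\times 2$ Morita-context case of Theorem~2.12, by grouping two of the three diagonal rings into a single block so that $T$ becomes a Morita context with zero pairing. One notes that $T$ admits two such block decompositions. In decomposition~(A), the upper-left $2\times 2$ becomes a single ring: $T=\left(\begin{smallmatrix} R' & 0\\ N' & A_3\end{smallmatrix}\right)$, where $R'=\left(\begin{smallmatrix} A_1 & 0\\ A_{21} & A_2\end{smallmatrix}\right)$ and $N'=(A_{31},\,A_{32})$ is an $(A_3,R')$-bimodule. In decomposition~(B), the lower-right $2\times 2$ becomes a single ring: $T=\left(\begin{smallmatrix} A_1 & 0\\ N'' & S'\end{smallmatrix}\right)$, where $S'=\left(\begin{smallmatrix} A_2 & 0\\ A_{32} & A_3\end{smallmatrix}\right)$ and $N''=\left(\begin{smallmatrix} A_{21}\\ A_{31}\end{smallmatrix}\right)$ is an $(S',A_1)$-bimodule. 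In either presentation, the ideal we want to show is weakly clean is precisely the Morita-context ideal built from $I$ (resp.\ $K$) together with the matching ideal of the folded $2\times 2$ block.

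I will use two tools. Write $(\star)$ for the Chen--Chen result cited just before Theorem~2.12: in a zero-pairing Morita context with both diagonal ideals clean, the resulting context ideal is clean. Write $(\star\star)$ for Theorem~2.12 itself: when one of the two diagonal ideals is clean and the other is merely weakly clean, the context ideal is weakly clean. Since every clean ideal is in particular weakly clean, I may argue case by case according to which of $I,J,K$ is only assumed to be weakly clean.

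If $I$ is weakly clean and $J,K$ are clean, I use decomposition~(B): by $(\star)$, $\left(\begin{smallmatrix} J & 0\\ A_{32} & K\end{smallmatrix}\right)$ is a clean ideal of $S'$, and then $(\star\star)$ combines this with the weakly clean $I$ in $A_1$ to give the desired ideal of $T$. The case in which $K$ is the weakly clean one and $I,J$ are clean is symmetric and uses decomposition~(A). In the remaining case, $J$ is weakly clean while $I$ and $K$ are clean: in decomposition~(A), a first application of $(\star\star)$ (using that $I$ is clean) shows that $\left(\begin{smallmatrix} I & 0\\ A_{21} & J\end{smallmatrix}\right)$ is already a weakly clean ideal of $R'$, and then a second application of $(\star\star)$ to the outer context, using that $K$ is clean in $A_3$, completes the argument.

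The main obstacle is bookkeeping rather than calculation: one has to verify that each of the two block decompositions realises $T$ as a genuine Morita context with zero pairing, meaning that the folded multiplication in $R'$ or $S'$ and the bimodule actions of $R'$ or $S'$ on $N'$ or $N''$ coincide with those produced by the bimodule homomorphism $\phi$. Once that identification is in place, the theorem follows from at most two applications of $(\star\star)$ together with, in two of the three cases, a single application of $(\star)$.
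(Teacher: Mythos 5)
Your proposal is correct and follows essentially the same route as the paper: fold the $3\times 3$ lower triangular ring into nested $2\times 2$ zero-pairing Morita contexts and apply the Morita-context theorem (the paper's Theorem 2.14) twice, once to the folded block and once to the outer context. If anything you are slightly more complete than the paper, which explicitly treats only the case where $J$ is the ideal that is merely weakly clean; your two remaining cases are handled validly via the cited Chen--Chen clean-ideal result (though they could also be done with two applications of the context theorem alone by folding on the side containing the weakly clean ideal).
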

\begin{proof}
  Assume that $I$ and $K$ are clean ideals $A_1$ and $A_3$ and $J$ is weakly clean ideal of $A_2$. Let, $B=\left(
                                                                              \begin{array}{cc}
                                                                                A_2 & 0 \\
                                                                                A_{32} & A_3 \\
                                                                              \end{array}
                                                                            \right)$ and $M=\left(
                                                                                              \begin{array}{c}
                                                                                                A_{21} \\
                                                                                                A_{31} \\
                                                                                              \end{array}
                                                                                            \right)$.
  As $J$ is weakly clean ideal of  $A_2$ and $K$ is clean ideal of $A_3$, so by Theorem 2.14, we see that $P=\left(
                                                                                              \begin{array}{cc}
                                                                                                 J & 0 \\
                                                                                A_{32} & K \\
                                                                                              \end{array}
                                                                                            \right)$
  is a weakly clean ideal of $B$. Again by Theorem 2.14, $\left(
                                                           \begin{array}{cc}
                                                             I & 0 \\
                                                             M & P \\
                                                           \end{array}
                                                         \right)$ is a weakly clean ideal of $\left(
                                                                                                \begin{array}{cc}
                                                                                                  A_1 & 0 \\
                                                                                                  M & B \\
                                                                                                \end{array}
                                                                                              \right)
                                                         $,
  that is $\left(
                            \begin{array}{ccc}
                              I & 0 & 0 \\
                              A_{21} & J & 0 \\
                              A_{31} & A_{32} & K \\
                            \end{array}
                          \right)$ is a weakly clean ideal of $\left(
                            \begin{array}{ccc}
                              A_1 & 0 & 0 \\
                              A_{21} & A_2 & 0 \\
                              A_{31} & A_{32} & A_3\\
                            \end{array}
                          \right)$.
\end{proof}
\begin{Thm}
  Let $A_1$, $A_2$ and $A_3$ are rings. If the formal triangular matrix ideal $\left(
                                                                                  \begin{array}{ccc}
                                                                                    I & 0 & 0 \\
                                                                                    A_{21} & J & 0 \\
                                                                                    A_{31} & A_{32} & K \\
                                                                                  \end{array}
                                                                                \right) $
  is a weakly clean ideal of $T=\left(
                                                                                  \begin{array}{ccc}
                                                                                    A_1 & 0 & 0 \\
                                                                                    A_{21} & A_2 & 0 \\
                                                                                    A_{31} & A_{32} & A_3 \\
                                                                                  \end{array}
                                                                                \right)$
  then $I$, $J$ and $K$ are weakly clean ideals of $A_1$, $A_2$ and $A_3$ respectively.
\end{Thm}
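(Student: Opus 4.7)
The plan is to prove the three statements one at a time by the same embedding argument: given $a\in I$ (respectively $b\in J$, $c\in K$), lift it to the matrix having $a$ (resp.\ $b$, $c$) in the $(1,1)$ (resp.\ $(2,2)$, $(3,3)$) diagonal slot and zeros elsewhere, apply the weakly clean decomposition that is available by hypothesis, and then read off the corresponding scalar decomposition from the appropriate diagonal entry.

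First I would record the two elementary facts about a lower triangular matrix ring $T$ that make the argument work. If $U=(u_{ij})\in T$ is a unit, with inverse $V=(v_{ij})$, then examining the diagonal entries of $UV=VU=\mathrm{diag}(1,1,1)$ gives $u_{ii}v_{ii}=v_{ii}u_{ii}=1$, so each diagonal entry $u_{ii}$ is a unit of $A_i$. If $E=(e_{ij})\in T$ is idempotent, then the $(i,i)$ entry of $E^2=E$ gives $e_{ii}^2=e_{ii}$ since below-diagonal products do not contribute to the diagonal of $E^2$ in a lower triangular ring. So the diagonal projections $T\to A_i$ send units to units and idempotents to idempotents.

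Now for the three statements: given $a\in I$, form
\[
A=\begin{pmatrix} a & 0 & 0 \\ 0 & 0 & 0 \\ 0 & 0 & 0 \end{pmatrix},
\]
which lies in the given weakly clean ideal of $T$. By hypothesis there exist a unit $U=(u_{ij})\in T$ and an idempotent $E=(e_{ij})\in T$ with $A=U+E$ or $A=U-E$. Reading the $(1,1)$ entry yields $a=u_{11}+e_{11}$ or $a=u_{11}-e_{11}$, and by the preceding paragraph $u_{11}\in U(A_1)$ and $e_{11}\in\mathrm{Idem}(A_1)$; hence $I$ is a weakly clean ideal of $A_1$. The same argument applied to
\[
B=\begin{pmatrix} 0 & 0 & 0 \\ 0 & b & 0 \\ 0 & 0 & 0 \end{pmatrix}\quad\text{and}\quad C=\begin{pmatrix} 0 & 0 & 0 \\ 0 & 0 & 0 \\ 0 & 0 & c \end{pmatrix},
\]
using the $(2,2)$ and $(3,3)$ diagonal projections, shows that $J$ and $K$ are weakly clean ideals of $A_2$ and $A_3$ respectively.

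There is no real obstacle here; the only point that requires a line of justification is the claim that diagonal entries of units and idempotents of a lower triangular matrix ring are themselves units and idempotents, and this is immediate as above. The proof is essentially the dual of the forward direction (Theorem~2.15): whereas that theorem assembled decompositions from the three corners, the converse disassembles a decomposition at one corner at a time by the diagonal ring homomorphism $T\to A_i$.
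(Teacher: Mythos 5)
Your proposal is correct and follows essentially the same route as the paper: embed the element into the appropriate diagonal corner of the formal triangular matrix ideal, apply the weakly clean decomposition in $T$, and read off the diagonal entry, noting that diagonal entries of units and idempotents of the lower triangular ring are units and idempotents of the corner rings. Your explicit justification of that last fact (which the paper treats as clear) is a welcome but inessential addition.
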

\begin{proof}
  For $x\in I$, we have $\left(
                           \begin{array}{ccc}
                             x & 0 & 0 \\
                             0 & 0 & 0 \\
                             0 & 0 & 0 \\
                           \end{array}
                         \right)\in \left(
                                                                                  \begin{array}{ccc}
                                                                                    I & 0 & 0 \\
                                                                                    A_{21} & J & 0 \\
                                                                                    A_{31} & A_{32} & K \\
                                                                                  \end{array}
                                                                                \right)$
Thus, \begin{center}
        $\left(
                           \begin{array}{ccc}
                             x & 0 & 0 \\
                             0 & 0 & 0 \\
                             0 & 0 & 0 \\
                           \end{array}
                         \right)=\left(
                           \begin{array}{ccc}
                             e_1 & 0 & 0 \\
                             \star & e_2 & 0 \\
                             \star & \star & e_3 \\
                           \end{array}
                         \right)+\left(
                           \begin{array}{ccc}
                             u_1 & 0 & 0 \\
                             \star & u_2 & 0 \\
                             \star & \star & u_3 \\
                           \end{array}
                         \right)$

      \end{center}
      or
      \begin{center}
        $\left(
                           \begin{array}{ccc}
                             x & 0 & 0 \\
                             0 & 0 & 0 \\
                             0 & 0 & 0 \\
                           \end{array}
                         \right)=-\left(
                           \begin{array}{ccc}
                             e_1 & 0 & 0 \\
                             \star & e_2 & 0 \\
                             \star & \star & e_3 \\
                           \end{array}
                         \right)+\left(
                           \begin{array}{ccc}
                             u_1 & 0 & 0 \\
                             \star & u_2 & 0 \\
                             \star & \star & u_3 \\
                           \end{array}
                         \right)$

      \end{center}
 where $\left(
                           \begin{array}{ccc}
                             e_1 & 0 & 0 \\
                             \star & e_2 & 0 \\
                             \star & \star & e_3 \\
                           \end{array}
                         \right)\in Idem(T)$ and $\left(
                           \begin{array}{ccc}
                             u_1 & 0 & 0 \\
                             \star & u_2 & 0 \\
                             \star & \star & u_3 \\
                           \end{array}
                         \right)\in U(T)$.
  It is clear that $e_1^2=e_1\in Idem(A_1)$ and $u_1\in U(A_1)$. Also $x=e_1+u_1$ or $x=-e_1+u_1$, so $I$ is weakly clean ideal of $A_1$. Similarly we can show that $J$ and $K$ are weakly clean ideals of $A_2$ and $A_3$ respectively.
\end{proof}
A finite orthogonal set of idempotents $e_1, \cdot\cdot\cdot , e_n$ in a ring $R$ is said to be complete set if $e_1+ \cdot\cdot\cdot +e_n=1$.
\begin{Prop}
  Let $R$ be a ring and $I$ an ideal of $R$. Then the following are equivalent:
  \begin{enumerate}
    \item $I$ is a weakly clean ideal of $R$.
    \item There exists a complete set $\{e_1, e_2, \cdot\cdot\cdot, e_n\}$ of idempotents such that $e_iIe_i$ is a weakly clean ideal of $e_iRe_i$, for all $i$ and at most one $e_iIe_i$ is not clean ideal of $e_iRe_i$.
  \end{enumerate}
\end{Prop}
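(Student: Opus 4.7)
$(1)\Rightarrow (2)$ is immediate upon taking the singleton complete set $\{1\}$: then $e_1Ie_1=I$ is weakly clean in $e_1Re_1=R$ by hypothesis, and the ``at most one not clean'' condition is vacuous. All the content sits in $(2)\Rightarrow(1)$, which I would prove by first reducing the general $n$-idempotent situation to the case $n=2$ and then handling that case by a Peirce decomposition combined with a Schur-complement-style factorisation into elementary unit matrices.

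For the reduction, I would relabel so that $e_1Ie_1$ is the (possibly) non-clean corner; then $e_iIe_i$ is clean for every $i\geq 2$. Setting $f=e_2+\cdots+e_n=1-e_1$, the family $\{e_2,\ldots,e_n\}$ is a complete orthogonal set in the corner ring $fRf$ with each $e_i(fIf)e_i=e_iIe_i$ clean. The clean-ideal analogue of the present proposition---proved by exactly the argument below but with all signs taken positive, so that no minus decomposition is ever required---then forces $fIf$ to be a clean ideal of $fRf$. This leaves the case where $\{e_1,f\}$ is a complete orthogonal pair with $e_1Ie_1$ weakly clean and $fIf$ clean.

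For this $n=2$ step, given $x\in I$ I would Peirce-decompose $x=x_{11}+x_{1f}+x_{f1}+x_{ff}$ with $x_{ab}=axb$. Because $e_1Ie_1$ is weakly clean, select the weakly clean expression $x_{11}=v_1\pm g_1$ with $v_1\in\U(e_1Re_1)$ and $g_1\in\idem(e_1Re_1)$, and set the ansatz $G=g_1+g_f$ with $g_f\in\idem(fRf)$ to be determined. Since $e_1,f$ are orthogonal, $G^2=G$ in $R$. Writing $U=x\mp G$ and using that the off-diagonal Peirce components have square zero---so that $1+x_{f1}v_1^{-1}$ and $1+v_1^{-1}x_{1f}$ are units of $R$ with inverses $1-x_{f1}v_1^{-1}$ and $1-v_1^{-1}x_{1f}$---the element $U$ factors as $(\text{unit})\cdot\bigl(v_1+(z\mp g_f)\bigr)\cdot(\text{unit})$, where $z=x_{ff}-x_{f1}v_1^{-1}x_{1f}\in fIf$ is the Schur complement. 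Hence $U$ is a unit of $R$ precisely when $z\mp g_f$ is a unit of $fRf$.

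The main obstacle I anticipate is the minus case for $x_{11}$: here I would need $z$ to admit a \emph{minus} weakly clean decomposition $z=v_f-g_f$, whereas cleanness of $fIf$ supplies only the plus form directly. The resolution is that clean ideals are automatically closed under both signs: since $-z\in fIf$ is also clean, writing $-z=w+g$ with $w\in\U(fRf)$ and $g\in\idem(fRf)$ yields $z=(-w)-g$, which is a valid minus decomposition. With this in hand, in either sign case the required $g_f$ exists, $U$ is a unit of $R$ and $G$ is an idempotent of $R$, so $x=U\pm G$ exhibits $x$ as a weakly clean element. Since $x\in I$ was arbitrary, $I$ is a weakly clean ideal of $R$.
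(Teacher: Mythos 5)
Your proof is correct and follows essentially the same route as the paper: reduce to a complete pair of idempotents, use the weakly clean decomposition of the $e_1$-corner entry, and then make the Schur complement in the other corner clean so that the resulting Peirce-matrix $U$ is a unit and $G=g_1+g_f$ an idempotent, exactly as in the paper's two cases (which cite Chen--Chen's Proposition 1.15 for the factorisation you carry out explicitly). You additionally spell out two points the paper leaves implicit --- the reduction from general $n$ to $n=2$ via $f=e_2+\cdots+e_n$, and the observation that a clean ideal also yields minus decompositions since $-z$ lies in the ideal --- which is a welcome clarification rather than a deviation.
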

\begin{proof}
  $(1)\Rightarrow (2)$ is trivial by taking $n=1$ and $e_1=1$.\par
  $(2)\Rightarrow (1)$  It is enough to show the result for $n=2$. Without loss of generality assume that $e_1Ie_1$ is weakly clean ideal of $e_1Re_1$ and $e_2Ie_2$ is clean ideal of $e_2Re_2$. It is clear that $I\cong \left(
                                                        \begin{array}{cc}
                                                          e_1Ie_1 & e_1Ie_2 \\
                                                          e_2Ie_1 & e_2Ie_2 \\
                                                        \end{array}
                                                      \right)$
  and $R\cong \left(
                                                        \begin{array}{cc}
                                                          e_1Re_1 & e_1Re_2 \\
                                                          e_2Re_1 & e_2Re_2 \\
                                                        \end{array}
                                                      \right)$ as $\{e_1, e_2\}$ be a complete set.
  Let $A=\left(
           \begin{array}{cc}
             a_{11} & a_{12} \\
             a_{21} & a_{22} \\
           \end{array}
         \right)\in I$.
As $e_1Ie_1$ is weakly clean ideal, so $a_{11}=u+e$ or $a_{11}=u-e$, where $e\in Idem(e_1Re_1)$ and $u\in U(e_1Re_1)$. Also $a_{22}-a_{21}u^{-1}a_{12}\in e_2Ie_2$. \\
Case I: If $a_{11}=e+u$, then we can set $a_{22}-a_{21}u^{-1}a_{12}=f+v$, where $f\in Idem(e_2Re_2)$ and $v\in U(e_2Re_2)$ then by Proposition $1.15$ \cite{chen2003clean}, $A$ is a clean element of $\left(
                                                        \begin{array}{cc}
                                                          e_1Re_1 & e_1Re_2 \\
                                                          e_2Re_1 & e_2Re_2 \\
                                                        \end{array}
                                                      \right)$. \\
Case II: If $a_{11}=-e+u$ then we can set $a_{22}-a_{21}u^{-1}a_{12}=-f+v$, where $f\in Idem(e_2Re_2)$ and $v\in U(e_2Re_2)$. Set $E=\left(
                                                                                       \begin{array}{cc}
                                                                                         e & 0 \\
                                                                                         0 & f \\
                                                                                       \end{array}
                                                                                     \right)$ and $U=\left(
                                                                                                       \begin{array}{cc}
                                                                                                         u & a_{12} \\
                                                                                                         a_{21} & v+a_{21}u^{-1}a_{12} \\
                                                                                                       \end{array}
                                                                                                     \right)$
By Proposition $1.15$ \cite{chen2003clean}, $E^2=E$ and $U$ is a unit in $\left(
                                                        \begin{array}{cc}
                                                          e_1Re_1 & e_1Re_2 \\
                                                          e_2Re_1 & e_2Re_2 \\
                                                        \end{array}
                                                      \right)$. Also $A=-E+U$, as required.

\end{proof}
\begin{Prop}
  Let $I$ be an ideal of a commutative ring $R$. Then $I$ is weakly clean ideal of $R$ \textit{if and only if} the ideal $I[[x]]$ is weakly clean ideal of $R[[x]]$.
\end{Prop}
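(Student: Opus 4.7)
The plan is to exploit two standard facts about the power series ring $R[[x]]$ over a commutative ring $R$: every idempotent of $R[[x]]$ is already an idempotent of $R$ (i.e.\ a constant series), and a series $\sum_{i\ge 0} u_i x^i$ is a unit in $R[[x]]$ exactly when its constant term $u_0$ is a unit in $R$. Granted these, both directions of the biconditional are short.

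For the forward direction, suppose $I[[x]]$ is weakly clean and fix $a\in I$. Viewing $a$ as a constant series in $I[[x]]$, we obtain $a = u(x) + \varepsilon(x)$ or $a = u(x) - \varepsilon(x)$ with $u(x)\in U(R[[x]])$ and $\varepsilon(x)\in \idem(R[[x]])$. By the first fact $\varepsilon(x) = e$ for some $e\in \idem(R)$; then $u(x) = a\mp e$ is itself constant, so $u(x) = u_0\in U(R)$. This exhibits $a$ as $u_0 \pm e$, so $I$ is weakly clean.

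For the converse, suppose $I$ is weakly clean and fix $f(x) = \sum_{i\ge 0} a_i x^i \in I[[x]]$, so each $a_i\in I$. Apply the weakly clean decomposition in $I$ to the constant term $a_0$: write $a_0 = u + e$ or $a_0 = u - e$ with $u\in U(R)$ and $e\in \idem(R)\subseteq \idem(R[[x]])$. In either case the series $f(x)\mp e$ has constant term $u\in U(R)$ and therefore lies in $U(R[[x]])$ by the second fact. This yields the decomposition $f(x) = (f(x)\mp e)\pm e$, exhibiting $f(x)$ as a sum or difference of a unit and an idempotent in $R[[x]]$.

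The main obstacle is establishing the idempotent-is-constant lemma. I would do this by writing $\varepsilon(x) = e_0 + x\,g(x)$ and comparing terms in $\varepsilon(x)^2 = \varepsilon(x)$: the constant part gives $e_0^2 = e_0$, and the remaining part gives $(1-2e_0)\,g(x) = x\,g(x)^2$. Since $e_0^2 = e_0$ implies $(1-2e_0)^2 = 1$, the element $1-2e_0$ is a self-inverse unit of $R$, whence $g(x) = x\,g(x)^2(1-2e_0)\in (x)$. Iterating this substitution inductively forces $g(x)\in (x^n)$ for every $n\ge 1$, so $g(x) = 0$ and $\varepsilon(x) = e_0 \in \idem(R)$. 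Once this lemma is in hand, together with the elementary characterisation of units in $R[[x]]$, all remaining manipulations are routine.
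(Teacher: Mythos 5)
Your proposal is correct, and one half of it coincides with the paper: for the implication from $I$ weakly clean to $I[[x]]$ weakly clean, both you and the paper decompose only the constant term $a_0=u\pm e$ of $f(x)\in I[[x]]$ and absorb the higher-order part into the unit, using that a power series with unit constant term is a unit of $R[[x]]$. For the other implication the routes differ. The paper simply remarks that $I$ is a homomorphic image of $I[[x]]$ (evaluation $x\mapsto 0$), which carries units to units and idempotents to idempotents, so any weakly clean decomposition of $a\in I$ viewed as a constant series projects to one in $R$; no structural information about idempotents of $R[[x]]$ is needed. You instead prove the stronger fact that every idempotent of $R[[x]]$ over a commutative $R$ is a constant idempotent of $R$, via the identity $(1-2e_0)g=xg^2$, the observation that $1-2e_0$ is a self-inverse unit, and the inductive conclusion $g\in\bigcap_{n\geq 1}(x^n)=0$; from this you deduce that the unit in the decomposition of a constant element is itself constant with unit constant term. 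Your lemma and its proof are correct (and commutativity, which the proposition assumes, is genuinely needed there: $M_2(k)[[x]]\cong M_2(k[[x]])$ has non-constant idempotents such as $E_{11}+xE_{12}$), so your argument is a self-contained, slightly heavier alternative, while the paper's projection argument is shorter because it only uses that evaluation at $0$ preserves units and idempotents.
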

\begin{proof}
  Let $I$ be a weakly clean ideal of $R$. Let $f(x)=\sum a_ix^i\in I[[x]]$, clearly $a_0\in I$, so $a_0=u_0+e_0$ or $a=u_0-e_0$, where $e_0\in Idem(R)$ and $u_0\in U(R)$. If $a_0=u_0+e_0$, then $f(x)=\sum a_ix^i=e_0+u_0+a_1x+a_2x^2+\cdot\cdot\cdot$, where $u_0+a_1x+a_2x^2+\cdot\cdot\cdot\in U(R[[x]])$ and $e_0\in Idem(R)\subseteq Idem(R[[x]])$. Similarly for $a=u_0-e_0$, $f(x)$ is weakly clean element in $R[[x]]$. Conversely if $I[[x]]$ is a weakly clean ideal of $R[[x]]$ then as a homomorphic copy of $I[[x]]$, $I$ is also a weakly clean ideal of $R$.
\end{proof}
Let $R$ be a commutative ring and $M$ be a $R$-module. Then the idealization of $R$ and $M$ is the ring $R(M)$ with underlying set $R\times M$ under coordinatewise addition and multiplication given by $(r,m)(r',m')=(rr', rm'+r'm)$, for all $r, r'\in R$ and $m, m' \in M$. It is obvious that if $I$ is an ideal of $R$ then for any submodule $N$ of $M$, $I(N)=\{(r,n)\, : \,r\in I \,\,and \, \,n\in N \}$ is an ideal of $R(M)$. We mention basic existing result about idempotent and unit element in $R(M)$ and study the weakly clean ideals of the idealization $R(M)$ of $R$ and $R$-module $M$.
\begin{Lem}
  Let $R$ be a commutative ring and $R(M)$ be the idealization of $R$ and $R$-module $M$. Then the following hold:
  \begin{enumerate}
    \item  $(r,m) \in Idem(R(M))$ if and only if $r \in Idem(R)$ and $m =0$.
    \item  $(r,m) \in U(R(M))$ if and only if $r \in U(R)$.
  \end{enumerate}
\end{Lem}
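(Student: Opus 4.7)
The plan is to verify both statements by directly computing in $R(M)$ using its defined multiplication $(r,m)(r',m')=(rr',\,rm'+r'm)$, and in each case handling the forward and converse directions separately.

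For part (1), I would first compute $(r,m)^2=(r^2,\,2rm)$, so that the idempotent equation $(r,m)^2=(r,m)$ decouples into the two scalar conditions $r^2=r$ in $R$ and $2rm=m$ in $M$. The first immediately gives $r\in\idem(R)$. The mild subtlety is pushing the second condition to $m=0$ without assuming anything about the characteristic of $R$: multiply $2rm=m$ on the left by $r$ to obtain $2r^2m=rm$, which combined with $r^2=r$ forces $rm=0$, and then $m=2rm=0$. For the converse, if $r^2=r$ and $m=0$ the computation $(r,0)^2=(r^2,0)=(r,0)$ is immediate.

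For part (2), the forward direction is routine: if $(r,m)(s,n)=(1,0)$, then looking at the first coordinate gives $rs=1$, and similarly $sr=1$ from the other product, so $r\in U(R)$. For the converse, the main task is to construct the inverse explicitly. Given $r\in U(R)$ with inverse $s=r^{-1}$, I would guess $(s,n)$ with $n=-s^2m$ (so that $rn+sm=-sm+sm=0$) and then verify
\[
(r,m)(s,-s^2m)=(rs,\,-rs^2m+sm)=(1,\,-sm+sm)=(1,0),
\]
together with the symmetric product on the other side; this shows $(s,-s^2m)$ is a two-sided inverse in $R(M)$.

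I do not anticipate any real obstacle: both parts reduce to a line of coordinate arithmetic, and the only place that requires a moment's thought is the characteristic-free deduction $m=0$ from $2rm=m$ and $r^2=r$ in part (1). The author's reference to this as a ``basic existing result'' confirms the proof should be kept short and purely computational.
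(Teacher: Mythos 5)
Your proof is correct, and there is nothing in the paper to compare it against: the authors state this lemma without proof, explicitly treating it as a ``basic existing result'' about the idealization $R(M)$. Your computation is the standard argument one would supply: squaring gives the conditions $r^2=r$ and $2rm=m$, and your characteristic-free deduction $rm=0$, hence $m=2rm=0$, is exactly the right way to avoid dividing by $2$; likewise the explicit inverse $(r^{-1},-r^{-2}m)$ settles part (2). No gaps.
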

\begin{Prop}
Let $R$ be a commutative ring and $R(M)$ is a idealization of $R$ and $R$-module $M$. Then an ideal $I$ of $R$ is weakly clean ideal(clean ideal) of $R$ \textit{if and only if} $I(N)$ is weakly clean ideal(clean ideal) of $R(M)$, for any submodule $N$ of $M$.
\end{Prop}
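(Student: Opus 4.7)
The plan is to leverage Lemma 2.22 directly: it pins down idempotents and units of $R(M)$ completely in terms of the corresponding data in $R$, so the equivalence becomes an almost formal coordinate-wise check. I will handle the weakly clean case in detail and then note that the clean case is the same argument with the minus-option deleted.

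For the forward direction, I would take any element $(r,n) \in I(N)$, so that $r \in I$ and $n \in N$. Using that $I$ is weakly clean, write $r = u + e$ or $r = u - e$ with $u \in U(R)$ and $e \in \mathrm{Idem}(R)$. Then in $R(M)$ form the decomposition
\[
(r,n) = (u,n) + (e,0) \quad \text{or} \quad (r,n) = (u,n) - (e,0),
\]
and invoke Lemma 2.22: $(u,n) \in U(R(M))$ because $u \in U(R)$, and $(e,0) \in \mathrm{Idem}(R(M))$ because $e^2 = e$ with second coordinate zero. This exhibits $(r,n)$ as a weakly clean element of $R(M)$, so $I(N)$ is a weakly clean ideal.

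For the converse, given $r \in I$, consider the element $(r,0) \in I(N)$. By hypothesis there exist $(u,m) \in U(R(M))$ and $(f,\ell) \in \mathrm{Idem}(R(M))$ such that $(r,0) = (u,m) + (f,\ell)$ or $(r,0) = (u,m) - (f,\ell)$. Lemma 2.22 forces $\ell = 0$ and $f \in \mathrm{Idem}(R)$, and also $u \in U(R)$. Comparing coordinates in the first component gives $r = u + f$ or $r = u - f$, so $r$ is weakly clean in $R$. Hence $I$ is a weakly clean ideal of $R$.

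There is no real obstacle here; the only thing to watch is the second coordinate bookkeeping in the reverse direction (making sure one correctly reads off $\ell = 0$ and $m = 0$ from the equation $(r,0) = (u \pm f, m \pm \ell)$, which forces $m \pm \ell = 0$ but does not force $m = 0$ on its own — however $\ell = 0$ from the idempotent characterization, and then $m = 0$ follows). The clean-ideal statement is obtained by simply suppressing the ``$-$'' branch in both directions of the argument above, so the two assertions of the proposition are proved simultaneously.
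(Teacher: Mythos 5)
Your proposal is correct and follows essentially the same route as the paper: both directions reduce to the characterization of idempotents and units of $R(M)$ (the idealization lemma), with the forward direction using the decomposition $(r,n)=(u,n)\pm(e,0)$ and the converse reading off $r=u\pm f$ from the first coordinate. The only cosmetic differences are your choice of the element $(r,0)$ rather than a general $(r,n)$ in the converse and the lemma numbering, neither of which affects the argument.
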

\begin{proof}
  $(\Rightarrow)$ Consider $(x,n)\in I(N)$. For $x\in I$, $x=u+e$ or $x=u-e$, where $u\in U(R)$ and $e\in Idem(R)$, so $(x,n)=(e,0)+(u,n)$ or $(x,n)=-(e,0)+(u,n)$, where $(e,0)\in Idem(R(M))$ and $(u,n)\in U(R(M))$, by Lemma 2.20.\\

  $(\Leftarrow)$ Let $r\in I$, for $(r,n)\in I(N)$, $(r,n)=(e,0)+(u,n')$ or $(r,n)=-(e,0)+(u,n')$, where $(e,0)\in Idem(R(M))$, $(u,n')\in U(R(M))$ and $n,n'\in M$. Hence $r=e+u$ or $r=-e+u$, where $e\in Idem(R)$ and $u\in U(R)$ by Lemma 2.20, as required.
\end{proof}
\begin{Thm}
  Let $I$ be an ideal of a ring $R$ containing $J(R)$ and idempotents can be lifted modulo $J(R)$. Then $I$ is weakly clean ideal of $R$ \textit{if and only if} $I/J(R)$ is weakly clean ideal of $R/J(R)$.
\end{Thm}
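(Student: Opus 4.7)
My plan is to prove both implications separately, with the forward direction being a routine reduction modulo $J(R)$ and the reverse direction relying on two standard ``lifting'' observations: that idempotents lift by hypothesis, and that elements whose images in $R/J(R)$ are units are themselves units in $R$. The hypothesis $J(R)\subseteq I$ is needed so that $I/J(R)$ makes sense as an ideal of $R/J(R)$.

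For $(\Rightarrow)$, I would take any coset $\overline{x}\in I/J(R)$ with representative $x\in I$, write $x=u+e$ or $x=u-e$ using the weakly clean property of $I$, and simply reduce modulo $J(R)$. The image $\overline{u}$ is a unit in $R/J(R)$ because the projection $R\to R/J(R)$ is a unital ring homomorphism (if $uv=vu=1$ in $R$ then $\overline{u}\,\overline{v}=\overline{v}\,\overline{u}=\overline{1}$), and the image $\overline{e}$ is clearly an idempotent. This gives $\overline{x}=\overline{u}+\overline{e}$ or $\overline{x}=\overline{u}-\overline{e}$, as desired.

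For $(\Leftarrow)$, I would start with $x\in I$ and pass to $\overline{x}\in I/J(R)$. By hypothesis we get $\overline{x}=\overline{v}+\overline{f}$ or $\overline{x}=\overline{v}-\overline{f}$ for some unit $\overline{v}\in U(R/J(R))$ and some idempotent $\overline{f}\in \mathrm{Idem}(R/J(R))$. Since idempotents lift modulo $J(R)$, choose $e\in \mathrm{Idem}(R)$ with $\overline{e}=\overline{f}$. Then $\overline{x\mp e}=\overline{v}$ is a unit in $R/J(R)$, so I need to conclude that $x\mp e$ is a unit in $R$ itself. This is the one step that requires a small argument: if $\overline{y}$ is a unit in $R/J(R)$, pick $z\in R$ with $yz-1,\,zy-1\in J(R)$; then $yz=1+j_1$ and $zy=1+j_2$ are units of $R$ because elements of $1+J(R)$ are units, so $y$ has both a right and a left inverse, hence is a unit. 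Applying this with $y=x-e$ (or $y=x+e$), we obtain $x=u+e$ or $x=u-e$ with $u\in U(R)$, showing that $I$ is weakly clean.

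The only subtlety worth flagging is the last unit-lifting lemma in the reverse direction; everything else is formal. Note also that $e$ need not lie in $I$, but that is fine since the definition of weakly clean ideal only asks that the idempotent and unit come from $R$. Thus both implications go through and the theorem follows.
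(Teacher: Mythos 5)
Your proof is correct and follows essentially the same route as the paper: reduce modulo $J(R)$ for the forward direction, and for the converse lift the idempotent (by hypothesis) and use the standard fact that units lift modulo $J(R)$. In fact you make explicit the unit-lifting step that the paper's own proof leaves implicit (it writes $x=e+u+r$ with $r\in J(R)$ without spelling out why the resulting element is a unit of $R$), so your write-up is, if anything, slightly more complete.
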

\begin{proof}
  $(\Leftarrow)$ Let, $x\in I$, so $\overline{x}=\overline{e}+\overline{u}$ or $\overline{x}=-\overline{e}+\overline{u}$, where $\overline{e}\in Idem(R/J(R))$ and $\overline{u}\in U(R/J(R))$. Hence, $x-e-u\in J(R)$ or $x+e-u\in J(R)$, so $x=e+u+r$ or $x=-e+u+r$, where $r\in J(R)$. Since idempotents can be lifted modulo $J(R)$, we may assume that $e$ is an idempotent of $R$. So $I$ is weakly clean ideal of $R$.\par
  Converse is clear because if $u\in U(R)$ then $u+J(R)\in U(R/J(R))$ and $e+J(R)\in Idem(R/J(R))$, for $e\in Idem(R)$.
\end{proof}
If $I+J$, sum of two ideals $I$ and $J$, is weakly clean ideal of $R$ then $I$ and $J$ are also weakly clean ideal of $R$, as $I,J\subseteq I+J$. The converse is not true as shown by the example given below.
\begin{example}
  For $R=\mathbb{Z}_{(3)}\cap \mathbb{Z}_{(5)}$, the ring $R\times R$ is not weakly clean ring by Theorem $1.7$ \cite{ahn2006weakly}.  Clearly the ideals $<\frac{2}{11}>$ and $<\frac{4}{7}>$ generated by $\frac{2}{11}$ and $\frac{4}{7}$ respectively are weakly clean ideals but not clean ideals of $R$. Let $I_1=<\frac{2}{11}>\times \{0\}$ and $I_2=\{0\}\times <\frac{4}{7}>$, then $I_1$ and $I_2$ are weakly clean ideals of $R\times R$ but not clean ideals of $R\times R$. Hence $I_1+I_2=<\frac{2}{11}> \times<\frac{4}{7}>$ is not weakly clean ideal of $R\times R$ by Theorem $2.11$.
\end{example}
However we have a partial converse as follows.
\begin{Prop}
  If $I$ and $J$ are two weakly clean ideals of a ring $R$ and any one of $I$ and $J$ is contained in $J(R)$ then $I+J$ is also weakly clean ideal of $R$.
\end{Prop}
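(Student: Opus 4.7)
The plan is to exploit the defining property of the Jacobson radical: namely, that $1 + r$ is a unit for every $r \in J(R)$, and hence adding an element of $J(R)$ to a unit yields a unit. Without loss of generality assume $J \subseteq J(R)$. Given an arbitrary $x \in I + J$, I would write $x = a + b$ with $a \in I$ and $b \in J \subseteq J(R)$, and then apply the weakly clean decomposition of $I$ to $a$.

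Concretely, since $I$ is a weakly clean ideal, either $a = e + u$ or $a = -e + u$ with $e \in \mathrm{Idem}(R)$ and $u \in U(R)$. In the first case,
\[
x = e + (u + b) = e + u(1 + u^{-1}b),
\]
and since $u^{-1}b \in J(R)$ (as $J(R)$ is an ideal), we have $1 + u^{-1}b \in U(R)$, whence $u + b \in U(R)$, giving a weakly clean presentation $x = e + (u+b)$. The second case is identical, producing $x = -e + (u+b)$ with $u + b \in U(R)$.

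The only step requiring comment is the reduction: we must justify that WLOG $J \subseteq J(R)$, which is immediate from the symmetric hypothesis ``any one of $I$ and $J$ is contained in $J(R)$.'' There is no real obstacle here, since the argument is essentially a one-line perturbation of the weakly clean decomposition of $a$ by an element of $J(R)$, exploiting the fact that units are stable under such perturbations. No appeal to earlier results in the paper beyond the definition of weakly clean ideal is needed.
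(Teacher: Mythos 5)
Your proof is correct and follows essentially the same route as the paper: decompose $x = a + b$ with $b \in J(R)$, apply the weakly clean decomposition to $a$, and absorb $b$ into the unit. You in fact make explicit the step the paper leaves implicit, namely that $u + b = u(1 + u^{-1}b)$ remains a unit because $u^{-1}b \in J(R)$.
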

\begin{proof}
  Without loss of generality assume $J\subseteq J(R)$ and $x\in I+J$. Then $x=a+b$, where $a\in I$ and $b\in J\subseteq J(R)$. So, there exist $e\in Idem(R)$ and $u\in U(R)$ such that $a=u+e$ or $a=u-e$. Hence $x=e+u+b$ or $x=-e+u+b$, which gives $x$ is a weakly clean element of $R$.
\end{proof}

\end{document}